\def\Var{\mathrm{Var}}
\def\dim{\mathrm{dim}}
\def\Der{\mathrm{Der}}
\def\Nov{\mathrm{Nov}}
\def\Com{\mathrm{Com}}
\def\As{\mathrm{As}}
\def\Bi{\mathrm{Bi}}
\def\Zinb{\mathrm{Zinb}}
\def\Der {\mathop {\fam 0 Der}\nolimits}
\newtheorem{definition}{Definition}
\newtheorem{proposition}{Proposition}
\newtheorem{theorem}{Theorem}
\newtheorem{corollary}{Corollary}
\title{Differential Novikov algebras}
\author{A. Dauletiyarova}
\address{SDU University, Kaskelen, Kazakhstan}
\email{d\_aigera95@mail.ru}
\author{B. Sartayev$^{*}$}
\address{Narxoz University, Almaty, Kazakhstan}
\email{baurjai@gmail.com}
\keywords{Novikov algebra, operad, free algebra}
\subjclass[2020]{17A30, 17A50, 16R10}
\thanks{${}^{*}$Corresponding author: Bauyrzhan Sartayev   (baurjai@gmail.com)}
\begin{document}

\maketitle

\begin{abstract}
In this paper, we consider Novikov algebra with derivation and algebra obtained from its dual operad. It turns out that the obtained dual operad has a connection with bicommutative algebras. The motivation for this work comes from the white and black Manin product of the Novikov operad with itself.
\end{abstract}

\section{Introduction}

In recent years, Novikov algebras have yielded surprising results in combinatorial algebra and combinatorics. For example, the Specht property for Novikov algebras was established in \cite{DotIsmUmi2023}. One of the main combinatorial results on the free Novikov algebras is the monomial basis of $\Nov\<X\>$, see \cite{DzhLofwall}. Recently, it was shown that every Novikov algebra can be embedded into appropriate commutative algebra with a derivation \cite{BCZ2017}. The analog of this result for noncommutative Novikov algebras is given in \cite{erlagol2021}, i.e., every noncommutative Novikov algebra can be embedded into appropriate associative algebra with derivation.

The defining identities of the variety of Novikov algebras first appeared in \cite{GD79}. This work also contains the first mention of a differential associative-commutative algebra under the operation
$$a\circ b=ad(b),$$
which satisfies the following identities:
\begin{equation}\label{right-com}
    (a\circ b)\circ c=(a\circ c)\circ b
\end{equation}
\begin{equation}\label{left-sym}
    (a\circ b)\circ c-a\circ (b\circ c)=(b\circ a)\circ c-b\circ (a\circ c).
\end{equation}

The motivation for this work comes from the white and black Manin product of Novikov operad with itself. Since $\Nov=\Nov^!$, we have
\[
\Nov\bullet\Nov=((\Nov\bullet\Nov)^!)^!=(\Nov^!\bullet^!\Nov^!)^!=(\Nov\circ\Nov)^!,
\]
where $\Nov$ is an operad derived from the variety of Novikov algebras. Since there is a one-to-one correspondence between a variety of algebras $\Var$ and the quadratic operad derived from it, we will use the same terminology for both in the future.
On one hand, the white Manin product of the Novikov operad with itself gives an operad that can be embedded into the operad $\Nov$ with a derivation. On the other hand, the black Manin product of the Novikov operad with itself defines a class of algebras with a rich algebraic structure, and both operads are dual to each other.

Let $\Der\Nov$ be the operad obtained from $\Nov\circ\Nov$, and let $\Der\Nov\<X\>$ be the free algebra of the variety $\Der\Nov$ generated by a set $X$.
One of the recent combinatorial results on $\Der\Nov\<X\>$ is the following chain of inclusions:
\begin{equation}\label{inclusion}
\Der\Nov\langle X\rangle \subset 
\Nov\langle X, \partial \rangle = \Nov \langle X^{(\omega )}\rangle 
\subset \Com\langle X^{(\omega )}, d \rangle = \Com \langle X^{(\omega , \omega )} \rangle .    
\end{equation}
Here $X^{(\omega ,\omega )} = (X^{(\omega )})^{(\omega )} = \{x^{(n,m)} \mid 
x\in X, n,m\in \mathbb Z_+ \}$, 
a variable $x^{(n,m)}$ represents $d^n\partial^m(x)$. The elements of $\Der\Nov \langle X\rangle $
are exactly those polynomials in  $\Com \langle X^{(\omega , \omega )} \rangle$
that can be presented as linear combinations of monomials 
\[
x_1^{(n_1,m_1)}\dots x_k^{(n_k,m_k)},\quad \sum_i n_i = \sum_i m_i = k+1.
\]

Let us list some results on Novikov algebras.
The basis of the free metabelian Novikov algebra was given in \cite{MetNov}. The basis of the free noncommutative Novikov algebra was constructed in \cite{DauSar}.  Results on the classification of simple Novikov algebras were presented in \cite{Xu1996,Xu2001,Zelm}. Although some structural problems of Novikov algebras were studied in works such as \cite{Osborn92,Osborn92-1,Osborn94}, further research is still being conducted on other structural aspects of these algebras. For example, in \cite{Pan2022} prime and semiprime Novikov algebras were studied.

In this paper, all algebras are defined over a field of characteristic $0$.

\section{Differential Novikov algebra}

\begin{definition}
An algebra is called $\Der\Nov$ if it satisfies the following identities:
$$(a\prec b)\prec c=(a\prec c)\prec b,$$
$$a\succ(b\succ c)+(b\succ c)\prec a=b\succ(a\succ c)+(a\succ c)\prec b,$$
$$(a\succ b)\succ c-(a\succ c)\prec b=(a\succ c)\succ b-(a\succ b)\prec c$$
and
\begin{multline*}
(a\prec b)\succ c-a\prec(c\prec b)+(a\prec c)\prec b+c\succ(a\prec b)=\\
(c\prec b)\succ a-c\prec(a\prec b)+(c\prec a)\prec b+a\succ(c\prec b).
\end{multline*}
\end{definition}
These identities appear from the white Manin product of operads $\Nov$ and $\Nov$. In \cite{KolMashSar}, it was proved that
\[
\Nov\circ\Nov=\Nov\otimes\Nov
\]
which gives
\[
\dim(\Der\Nov(n))=\binom{2n-2}{n-1}^2.
\]
In \cite{KSO2019}, it was shown that operad $\Nov\circ\Var:=\Der\Var$ can always be embedded into operad $\Var^d$ under the mapping $\tau$, where $\Var^d$ is a variety of algebras $\Var$ with a derivation $d$, $\tau(a\prec b) = a d(b)$ and $\tau(a\succ b) = d(a)b$. Indeed, if the weight criterion holds then every algebra of variety $\Der\Var$ can be embedded into an appropriate algebra of variety $\Var^d$. For example, every  noncommutative Novikov algebra $\textrm{N-}\Nov\langle X\rangle$ can be embedded into appropriate differential associative algebra under the mapping
\[
\tau:\textrm{N-}\Nov \rightarrow \As^{d}, 
\]
see \cite{erlagol2021}. Since
\[
\Nov\circ\As=\Nov\otimes\As,
\]
we obtain the dimension of the operad $\textrm{N-}\Nov$:
\[
\dim(\textrm{N-}\Nov(n))=n!\binom{2n-2}{n-1}.
\]
However, such equality on the Hadamard product does not hold for the Zinbiel operad, and an example of an algebra of $\Der\Zinb$ which cannot be embedded into any algebra of $\Zinb^d$ was given \cite{KolMashSar}.

\section{Dual operad of DerNov}

Let us calculate the dual operad of the operad $\Der\Nov$ that we denote by $\Der\Nov^!$ as given in \cite{KolMashSar}. Assume 
$e_1 = x_1\succ x_2$, $e_2 = x_2\succ x_1$,  $g_1=x_1\prec x_2$ and $g_2=x_2\prec x_1$.

Consider the dual basis 
$e_1^\vee, e_2^\vee, g_1^\vee, g_2^\vee $, and denote, 
respectively, 
$e_1^\vee = y_1\succeq y_2$, 
$g_1^\vee = y_1\preceq y_2$,
then 
$e_2^\vee = - y_2\succeq y_1$ and $g_2^\vee = - y_2\preceq y_1$.
The relations on $e_1^\vee, e_1^\vee, g_1^\vee, g_2^\vee$
are exactly those that make 
the skew-symmetric bracket 
\begin{multline*}
[y_1\otimes x_1, y_2\otimes x_2]
= e_1^\vee \otimes e_1 + e_2^\vee \otimes e_2 + g_1^\vee\otimes g_1 + g_2^\vee \otimes g_2 
=\\
(y_1\succeq y_2)\otimes (x_1\succ x_2)-(y_2\succeq y_1)\otimes (x_2\succ x_1)+
(y_1\preceq y_2)\otimes (x_1\prec x_2)-(y_2\preceq y_1)\otimes (x_2\prec x_1)
\end{multline*}
to satisfy the Jacobi identity. More explicitly, we obtain
\begin{multline*}
[[y_1\otimes x_1,y_2\otimes x_2],y_3\otimes x_3]=\\
(y_1\succeq y_2)\succeq y_3\otimes(x_1\succ x_2)\succ x_3-y_3\succeq(y_1\succeq y_2)\otimes x_3\succ (x_1\succ x_2)\\
+(y_1\succeq y_2)\preceq y_3\otimes(x_1\succ x_2)\prec x_3-y_3\preceq(y_1\succeq y_2)\otimes x_3\prec (x_1\succ x_2)\\
-(y_2\succeq y_1)\succeq y_3\otimes(x_2\succ x_1)\succ x_3+y_3\succeq(y_2\succeq y_1)\otimes x_3\succ (x_2\succ x_1)\\
-(y_2\succeq y_1)\preceq y_3\otimes(x_2\succ x_1)\prec x_3+y_3\preceq(y_2\succeq y_1)\otimes x_3\prec (x_2\succ x_1)\\
+(y_1\preceq y_2)\succeq y_3\otimes(x_1\prec x_2)\succ x_3-y_3\succeq(y_1\preceq y_2)\otimes x_3\succ (x_1\prec x_2)\\
+(y_1\preceq y_2)\preceq y_3\otimes(x_1\prec x_2)\prec x_3-y_3\preceq(y_1\preceq y_2)\otimes x_3\prec (x_1\prec x_2)\\
-(y_2\preceq y_1)\succeq y_3\otimes(x_2\prec x_1)\succ x_3+y_3\succeq(y_2\preceq y_1)\otimes x_3\succ (x_2\prec x_1)\\
-(y_2\preceq y_1)\preceq y_3\otimes(x_2\prec x_1)\prec x_3+y_3\preceq(y_2\preceq y_1)\otimes x_3\prec (x_2\prec x_1).
\end{multline*}
Rewriting the right side of the tensors to the normal form in $\Der\Nov$ and collecting these basis monomials, we obtain the defining identities of the operad $\Der\Nov^!$. Since we have the inclusions (\ref{inclusion}), we can rewrite the right side of the tensors in terms of $\Com\langle X^{(\omega,\omega)}\rangle$ as follows:
\begin{multline*}
[[y_1\otimes x_1,y_2\otimes x_2],y_3\otimes x_3]=
(y_1\succeq y_2)\succeq y_3\otimes(x_1'\circ x_2)'\circ x_3-y_3\succeq(y_1\succeq y_2)\otimes x_3'\circ (x_1'\circ x_2)\\
+(y_1\succeq y_2)\preceq y_3\otimes(x_1'\circ x_2)\circ x_3'-y_3\preceq(y_1\succeq y_2)\otimes x_3\circ (x_1'\circ x_2)'\\
-(y_2\succeq y_1)\succeq y_3\otimes(x_2'\circ x_1)'\circ x_3+y_3\succeq(y_2\succeq y_1)\otimes x_3'\circ (x_2'\circ x_1)\\
-(y_2\succeq y_1)\preceq y_3\otimes(x_2'\circ x_1)\circ x_3'+y_3\preceq(y_2\succeq y_1)\otimes x_3\circ (x_2'\circ x_1)'\\
+(y_1\preceq y_2)\succeq y_3\otimes(x_1\circ x_2')'\circ x_3-y_3\succeq(y_1\preceq y_2)\otimes x_3'\circ (x_1\circ x_2')\\
+(y_1\preceq y_2)\preceq y_3\otimes(x_1\circ x_2')\circ x_3'-y_3\preceq(y_1\preceq y_2)\otimes x_3\circ (x_1\circ x_2')'
\end{multline*}
\vspace*{-\baselineskip}
\vspace*{-\baselineskip}
\begin{multline*}
-(y_2\preceq y_1)\succeq y_3\otimes(x_2\circ x_1')'\circ x_3+y_3\succeq(y_2\preceq y_1)\otimes x_3'\circ (x_2\circ x_1')\\
-(y_2\preceq y_1)\preceq y_3\otimes(x_2\circ x_1')\circ x_3'+y_3\preceq(y_2\preceq y_1)\otimes x_3\circ (x_2\circ x_1')'=\\
(y_1\succeq y_2)\succeq y_3\otimes(x_1'' x_2`x_3`+x_1'x_2'`x_3`)-y_3\succeq(y_1\succeq y_2)\otimes (x_3'x_1'`x_2`+x_3'x_1'x_2``)\\
+(y_1\succeq y_2)\preceq y_3\otimes x_1' x_2`x_3'`-y_3\preceq(y_1\succeq y_2)\otimes(x_3x_1''` x_2`+x_3x_1''x_2``+x_3x_1'`x_2'`+x_3x_1'x_2'``)\\
-(y_2\succeq y_1)\succeq y_3\otimes(x_2'' x_1`x_3`+x_2'x_1'`x_3`)+y_3\succeq(y_2\succeq y_1)\otimes (x_3'x_2'`x_1`+x_3'x_2'x_1``)\\
-(y_2\succeq y_1)\preceq y_3\otimes x_2' x_1`x_3'`+y_3\preceq(y_2\succeq y_1)\otimes(x_3x_2''` x_1`+x_3x_2''x_1``+x_3x_2'`x_1'`+x_3x_2'x_1'``)
\end{multline*}
\vspace*{-\baselineskip}
\vspace*{-\baselineskip}
\begin{multline*}
+(y_1\preceq y_2)\succeq y_3\otimes(x_1'x_2'`x_3`+x_1 x_2''`x_3`) 
-y_3\succeq(y_1\preceq y_2)\otimes(x_3'x_1`x_2'`+x_3'x_1x_2'``)\\
+(y_1\preceq y_2)\preceq y_3\otimes x_1x_2'`x_3'`-
y_3\preceq(y_1\preceq y_2)\otimes (x_3x_1'`x_2'`+x_3x_1'x_2'``+x_3x_1`x_2''`+x_3x_1x_2''``)\\
-(y_2\preceq y_1)\succeq y_3\otimes(x_2'x_1'`x_3`+x_2 x_1''`x_3`) 
+y_3\succeq(y_2\preceq y_1)\otimes(x_3'x_2`x_1'`+x_3'x_2x_1'``)\\
-(y_2\preceq y_1)\preceq y_3\otimes x_2x_1'`x_3'`+
y_3\preceq(y_2\preceq y_1)\otimes (x_3x_2'`x_1'`+x_3x_2'x_1'``+x_3x_2`x_1''`+x_3x_2x_1''``),
\end{multline*}
where $x\succ y=x'\circ y$, $x\prec y=x\circ y'$ and $x\circ y=xy`$. 
Analogically,
\begin{multline*}
[[y_2\otimes x_2,y_3\otimes x_3],y_1\otimes x_1]=\\
(y_2\succeq y_3)\succeq y_1\otimes(x_2'' x_3`x_1`+x_2'x_3'`x_1`)-y_1\succeq(y_2\succeq y_3)\otimes (x_1'x_2'`x_3`+x_1'x_2'x_3``)\\
+(y_2\succeq y_3)\preceq y_1\otimes x_2' x_3`x_1'`-y_1\preceq(y_2\succeq y_3)\otimes(x_1x_2''` x_3`+x_1x_2''x_3``+x_1x_2'`x_3'`+x_1x_2'x_3'``)\\
-(y_3\succeq y_2)\succeq y_1\otimes(x_3'' x_2`x_1`+x_3'x_2'`x_1`)+y_1\succeq(y_3\succeq y_2)\otimes (x_1'x_3'`x_2`+x_1'x_3'x_2``)\\
-(y_3\succeq y_2)\preceq y_1\otimes x_3' x_2`x_1'`+y_1\preceq(y_3\succeq y_2)\otimes(x_1x_3''` x_2`+x_1x_3''x_2``+x_1x_3'`x_2'`+x_1x_3'x_2'``)\\
+(y_2\preceq y_3)\succeq y_1\otimes(x_2'x_3'`x_1`+x_2 x_3''`x_1`) 
-y_1\succeq(y_2\preceq y_3)\otimes(x_1'x_2`x_3'`+x_1'x_2x_3'``)\\
+(y_2\preceq y_3)\preceq y_1\otimes x_2x_3'`x_1'`-
y_1\preceq(y_2\preceq y_3)\otimes (x_1x_2'`x_3'`+x_1x_2'x_3'``+x_1x_2`x_3''`+x_1x_2x_3''``)\\
-(y_3\preceq y_2)\succeq y_1\otimes(x_3'x_2'`x_1`+x_3 x_2''`x_1`) 
+y_1\succeq(y_3\preceq y_2)\otimes(x_1'x_3`x_2'`+x_1'x_3x_2'``)\\
-(y_3\preceq y_2)\preceq y_1\otimes x_3x_2'`x_1'`+
y_1\preceq(y_3\preceq y_2)\otimes (x_1x_3'`x_2'`+x_1x_3'x_2'``+x_1x_3`x_2''`+x_1x_3x_2''``)
\end{multline*}
and
\begin{multline*}
[[y_3\otimes x_3,y_1\otimes x_1],y_2\otimes x_2]=\\
(y_3\succeq y_1)\succeq y_2\otimes(x_3'' x_1`x_2`+x_3'x_1'`x_2`)-y_2\succeq(y_3\succeq y_1)\otimes (x_2'x_3'`x_1`+x_2'x_3'x_1``)\\
+(y_3\succeq y_1)\preceq y_2\otimes x_3' x_1`x_2'`-y_2\preceq(y_3\succeq y_1)\otimes(x_2x_3''` x_1`+x_2x_3''x_1``+x_2x_3'`x_1'`+x_2x_3'x_1'``)\\
-(y_1\succeq y_3)\succeq y_2\otimes(x_1'' x_3`x_2`+x_1'x_3'`x_2`)+y_2\succeq(y_1\succeq y_3)\otimes (x_2'x_1'`x_3`+x_2'x_1'x_3``)\\
-(y_1\succeq y_3)\preceq y_2\otimes x_1' x_3`x_2'`+y_2\preceq(y_1\succeq y_3)\otimes(x_2x_1''` x_3`+x_2x_1''x_3``+x_2x_1'`x_3'`+x_2x_1'x_3'``)\\
+(y_3\preceq y_1)\succeq y_2\otimes(x_3'x_1'`x_2`+x_3 x_1''`x_2`) 
-y_2\succeq(y_3\preceq y_1)\otimes(x_2'x_3`x_1'`+x_2'x_3x_1'``)\\
+(y_3\preceq y_1)\preceq y_2\otimes x_3x_1'`x_2'`-
y_2\preceq(y_3\preceq y_1)\otimes (x_2x_3'`x_1'`+x_2x_3'x_1'``+x_2x_3`x_1''`+x_2x_3x_1''``)\\
-(y_1\preceq y_3)\succeq y_2\otimes(x_1'x_3'`x_2`+x_1 x_3''`x_2`) 
+y_2\succeq(y_1\preceq y_3)\otimes(x_2'x_1`x_3'`+x_2'x_1x_3'``)\\
-(y_1\preceq y_3)\preceq y_2\otimes x_1x_3'`x_2'`+
y_2\preceq(y_1\preceq y_3)\otimes (x_2x_1'`x_3'`+x_2x_1'x_3'``+x_2x_1`x_3''`+x_2x_1x_3''``).
\end{multline*}








For simplicity, in $\Der\Nov^!$ instead of the operations $\preceq$ and $\succeq$ we write the operations $\prec$ and $\succ$, respectively.
Finally, we obtain the following result:
\begin{proposition}
An operad $\Der\Nov^!$ is defined by the following identities:
\begin{equation}\label{id11}
a\prec(b\prec c)+(b\prec c)\succ a=0,    
\end{equation}
\begin{equation}\label{id12}
a\prec(b\succ c)=0,  
\end{equation}
\begin{equation}\label{id13}
(a\prec b)\prec c-(c\prec b)\succ a=(a\prec c)\prec b-(b\prec c)\succ a,   
\end{equation}
\begin{equation}\label{id14}
(a\succ b)\prec c+c\succ(a\succ b)-(a\succ c)\succ b=0,
\end{equation}
\begin{equation}\label{id15}
a\succ(b\prec c)+(b\prec c)\succ a=0,
\end{equation}
\begin{equation}\label{id16}
a\succ(b\succ c)=b\succ(a\succ c),
\end{equation}
\begin{equation}\label{id17}
(a\prec b)\succ c=(c\prec b)\succ a
\end{equation}
and
\begin{equation}\label{id18}
(a\succ b)\succ c=(a\succ c)\succ b.
\end{equation}    
\end{proposition}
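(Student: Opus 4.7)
The plan is to exploit the characterization already set up in the excerpt: the defining relations of $\Der\Nov^!$ on $\succeq,\preceq$ are exactly those that force the skew-symmetric bracket $[y\otimes x,\ y'\otimes x']$ to satisfy the Jacobi identity on $\Der\Nov^!\otimes\Der\Nov$. So I would form the cyclic sum
\[
J=[[y_1\otimes x_1,y_2\otimes x_2],y_3\otimes x_3]+[[y_2\otimes x_2,y_3\otimes x_3],y_1\otimes x_1]+[[y_3\otimes x_3,y_1\otimes x_1],y_2\otimes x_2],
\]
take as inputs the three expansions already displayed in the excerpt (in which each $\Der\Nov$-factor has been rewritten inside $\Com\langle X^{(\omega,\omega)}\rangle$ via $x\succ y=x'y$, $x\prec y=xy'$, and one further outer differentiation for the outer composition), and impose $J=0$.

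The coefficient-extraction is legitimate because of the chain of inclusions (\ref{inclusion}): every right-hand tensor factor lands in $\Com\langle X^{(\omega,\omega)}\rangle$, whose distinct monomials are linearly independent. I would therefore regroup the summands of $J$ by the monomial $x_1^{(n_1,m_1)}x_2^{(n_2,m_2)}x_3^{(n_3,m_3)}$ they carry on the right, and for each such monomial the accompanying $y$-combination must vanish separately. The possible derivation shapes in degree three are very restricted: the multiset of prime-orders on $(x_1,x_2,x_3)$ can only be $(0,1,2)$ (with several placements) or $(1,1,1)$. Using the $S_3$-symmetry generated by the cyclic summation to collapse equivalent placements, only a small number of orbit representatives need be read off; each orbit-representative coefficient produces, after renaming $\succeq\mapsto\succ$ and $\preceq\mapsto\prec$, one identity on the $y$-side, and the expectation is that these coincide exactly with (\ref{id11})--(\ref{id18}).

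The only genuine obstacle is bookkeeping: each cyclic term already contains sixteen tensor summands, each splitting into up to four commutative monomials upon expansion, so the pool of ``raw'' contributions numbers in the hundreds and must be sorted without omission or double-counting. A systematic way to manage this is to label each raw summand by the triple (outer bracket placement, inner operation, resulting derivation shape) and to dump all contributions into a table indexed by the target commutative monomial, reading off the $y$-coefficient only at the end. As a final sanity check I would compute $\dim \Der\Nov^!(3)$ directly from the proposed identities --- by reducing the degree-three monomials in $\succ,\prec$ modulo (\ref{id11})--(\ref{id18}) --- and verify both that no extra relation has been overlooked and that no redundancy has been introduced.
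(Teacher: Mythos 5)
Your proposal is essentially the paper's own argument: the paper likewise writes the three cyclic brackets, pushes the $\Der\Nov$-side into $\Com\langle X^{(\omega,\omega)}\rangle$ via the embedding (\ref{inclusion}), and reads off the identities (\ref{id11})--(\ref{id18}) by collecting the $y$-coefficients of the linearly independent commutative monomials in the Jacobi sum. (Only a minor bookkeeping slip: for ternary monomials here each of the two derivations occurs with total order $2$, not $3$, so the ``shapes'' are the compositions of $2$ into three parts; this does not affect the method.)
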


Let us denote by $\Der\Nov^!\<X\>$ a free algebra generated by the countable set $X$ defined by the identities of the operad $\Der\Nov^!$. Indeed, every monomial in $\Der\Nov^!\<X\>$ can be expressed as a sum of pure monomials involving only the operations $\prec$ or $\succ$. For monomials of the form
\[
(a\prec b)\succ c,\; (a\succ b)\prec c,\; a\prec (b\succ c)\; \textrm{and}\; a\succ (b\prec c),
\] 
we apply identities (\ref{id11}), (\ref{id14}), (\ref{id12}) and (\ref{id15}), respectively. Applying these identities to (\ref{id13}) and (\ref{id17}), we obtain the following result:
\begin{proposition}
\[
\Der\Nov^!\<X\>=\Der\Nov^!_{\prec}\<X\>\oplus\Der\Nov^!_{\succ}\<X\>,
\]
as a vector space, where $\Der\Nov^!_{\prec}\<X\>$ and $\Der\Nov^!_{\succ}\<X\>$ are the subspaces of $\Der\Nov^!\<X\>$ generated by monomials containing only operations $\prec$ and $\succ$, respectively. In particular, $\Der\Nov^!_{\prec}\<X\>$ is a Novikov algebra with some additional identities of a higher degree.
Analogically, $\Der\Nov^!_{\succ}\<X\>$ is a bicommutative algebra with some additional identities of a higher degree.
\end{proposition}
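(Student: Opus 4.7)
The plan is to carry out the reduction of mixed monomials that the authors indicate just before the proposition, and then inspect which identities survive on each pure summand.

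For the spanning part of the decomposition I would induct on the degree $n$ of a monomial $w\in\Der\Nov^!\<X\>$. The four rewrite rules are: identity (\ref{id11}) sends $(a\prec b)\succ c$ to $-c\prec(a\prec b)$; identity (\ref{id14}) sends $(a\succ b)\prec c$ to $(a\succ c)\succ b - c\succ(a\succ b)$; identity (\ref{id12}) kills $a\prec(b\succ c)$; and combining (\ref{id15}) with (\ref{id11}) rewrites $a\succ(b\prec c)$ as $a\prec(b\prec c)$. Given $w=u*v$ with $u,v$ already pure by induction, bilinearity reduces the problem to these four cases and their deeper iterates. The subtle issue is termination, since rules (\ref{id11}) and (\ref{id15}) replace $\succ$ by $\prec$ while rule (\ref{id14}) replaces $\prec$ by $\succ$, so no count of either symbol decreases monotonically. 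I would introduce a weight function such as the lexicographic pair $(\deg w,\, m(w))$, where $m(w)$ counts internal edges whose endpoints carry different operation symbols, and verify that each rewrite strictly decreases this weight.

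Directness of the sum follows because a pure $\prec$-monomial of degree $\geq 2$ has outermost operation $\prec$ while a pure $\succ$-monomial has outermost $\succ$, so the overlap is confined to the linear span of variables. More rigorously, after applying the reduction rules to the eight defining identities, every surviving relation lies entirely inside one of the subspaces $\Der\Nov^!_{\prec}\<X\>$ or $\Der\Nov^!_{\succ}\<X\>$, so the ideal they generate respects the splitting.

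Identifying the two structures is then a matter of reading off pure identities. For $\succ$, identities (\ref{id16}) and (\ref{id18}) are precisely the left-commutativity $a\succ(b\succ c)=b\succ(a\succ c)$ and right-commutativity $(a\succ b)\succ c=(a\succ c)\succ b$ defining a bicommutative algebra; the higher-degree additional relations arise from applying (\ref{id14}) inside deeper subtrees — for example, $(a\succ b)\prec(c\succ d)$ vanishes by (\ref{id12}) while also equalling a pure $\succ$-expression by (\ref{id14}), producing a nontrivial degree-$4$ identity in $\succ$ alone. For $\prec$, applying (\ref{id11}) to the mixed summands in (\ref{id13}) and (\ref{id17}) yields the pure identities
\[
c\prec(a\prec b) = a\prec(c\prec b), \qquad (a\prec b)\prec c - (a\prec c)\prec b = a\prec(b\prec c) - a\prec(c\prec b),
\]
from which I would derive the Novikov defining identities (right-commutativity and left-symmetry) together with the additional higher-degree consequences that make $\Der\Nov^!_{\prec}\<X\>$ strictly smaller than the free Novikov algebra.

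The principal obstacle is the termination and confluence of the rewriting, since the rules swap $\prec$ and $\succ$ in opposite directions and a naive count does not suffice. Once a monotone weight function has been secured, the remaining steps are bookkeeping: substitute the reduction rules into the defining identities (\ref{id11})--(\ref{id18}) and tabulate the surviving pure identities on each side.
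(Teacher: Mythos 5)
Your overall strategy is the one the paper uses: the paper's ``proof'' of this proposition is precisely the paragraph preceding it (rewrite the four mixed shapes via (\ref{id11}), (\ref{id14}), (\ref{id12}), (\ref{id15}), then push (\ref{id11}) into (\ref{id13}) and (\ref{id17}) to get the pure $\prec$-identities), and your identifications of the pure identities are correct: (\ref{id16}) and (\ref{id18}) give bicommutativity, while (\ref{id13}) and (\ref{id17}) become the right-symmetric and left-commutative identities, i.e.\ a Novikov structure on the $\prec$-part. You have also correctly located the real difficulty, but your proposed resolution of it does not work, and this is a genuine gap.

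Concretely, the four substitution rules you list do not by themselves reduce every mixed monomial, and the weight $(\deg w, m(w))$ is not monotone under them. Take $M=(a\prec b)\succ(c\succ d)$ with $a,b,c,d$ generators: rule (\ref{id11}) sends $M$ to $-(c\succ d)\prec(a\prec b)$, which still has exactly one mixed internal edge; applying (\ref{id14}) to that term produces $(c\succ(a\prec b))\succ d-(a\prec b)\succ(c\succ d)$, whose second summand is $M$ again, while applying (\ref{id15}) instead just returns $-M$. So the procedure loops (it yields the new consequence $(c\succ(a\prec b))\succ d=0$ but never expresses $M$ in pure form), and $m(w)$ does not strictly decrease. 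To reduce $M$ one must also invoke (\ref{id17}): $M=((c\succ d)\prec b)\succ a$, and then (\ref{id14}) gives the pure expression $((c\succ b)\succ d)\succ a-(b\succ(c\succ d))\succ a$. The paper does not secure termination by a weight function at all; it orders monomials so that every mixed monomial exceeds every pure one and then verifies confluence of the resulting rewriting system by a computer-assisted Gr\"obner basis computation (this is deferred to the proofs of Theorems \ref{th1} and the following theorem, and the linear independence of the pure normal forms --- hence the directness of the sum, which your ``outermost operation'' remark does not by itself establish in the quotient --- is only settled by the explicit bases constructed in Section 4). So to complete your argument you would need either to enlarge your rule set (adding (\ref{id17}) and the derived consequences such as $a\succ(b\prec c)=a\prec(b\prec c)$ and $(b\prec a)\succ c=(c\prec a)\prec b-(c\prec b)\prec a-a\prec(c\prec b)$) and prove termination for the enlarged system under the mixed-over-pure order, or to appeal directly to the Gr\"obner basis computation as the paper does.
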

We aim to describe a complete list of identities that appear in $\Der\Nov^!_{\prec}\<X\>$ and $\Der\Nov^!_{\succ}\<X\>$.

\begin{theorem}\label{th1}
The algebra $\Der\Nov^!_{\prec}\<X\>$ coincides with a free Novikov algebra with the following additional identities:
\begin{equation}\label{Nov1}
(a\prec(b\prec c))\prec d=a\prec(b\prec (c\prec d))=0,
\end{equation}
\begin{equation}\label{Nov2}
a\prec((b\prec c)\prec d)=a\prec((b\prec d)\prec c),
\end{equation}
and
\begin{equation}\label{Nov3}
((a\prec b)\prec c)\prec d=((a\prec c)\prec b)\prec d.
\end{equation}
\end{theorem}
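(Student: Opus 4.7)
The plan is to prove that $\Der\Nov^!_\prec\<X\>$ coincides with the free Novikov algebra modulo (\ref{Nov1})--(\ref{Nov3}) by verifying both inclusions: first, that the listed relations hold in $\Der\Nov^!_\prec\<X\>$, and second, that they generate all $\prec$-only identities derivable from (\ref{id11})--(\ref{id18}).

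For the first inclusion I would derive the needed identities from the eight defining relations. The guiding principle is that any mixed $\prec/\succ$ expression can be rewritten, via (\ref{id11}), (\ref{id15}), and (\ref{id17}), in pure-$\prec$ or pure-$\succ$ form, so identities about $\prec$ alone can be extracted from the eight defining relations by such rewriting. Right-commutativity of $\prec$ should emerge from (\ref{id13}) after substituting $(c\prec b)\succ a$ and $(b\prec c)\succ a$ via (\ref{id17}) and (\ref{id11}) to obtain a pure-$\prec$ equation, and left-symmetry follows in a parallel manner. The relations (\ref{Nov2}) and (\ref{Nov3}) are then immediate from right-commutativity applied to the innermost product. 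For (\ref{Nov1}) I would compute $(a\prec(b\prec c))\prec d$ by first using (\ref{id11}) to replace $a\prec(b\prec c)$ with $-(b\prec c)\succ a$, then invoking (\ref{id14}) to absorb the outer $\prec d$ into $\succ$-products, and finally applying (\ref{id17}), (\ref{id18}), and (\ref{id11}) cyclically to collapse everything to zero; an analogous chain handles $a\prec(b\prec(c\prec d))$.

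For the second inclusion, the strategy is to build a normal form for the free Novikov algebra modulo (\ref{Nov1})--(\ref{Nov3})---essentially the Dzhumadildaev--Lofwall basis of $\Nov\<X\>$ cut down by the new relations---and then match this against a spanning set of $\Der\Nov^!_\prec\<X\>$ obtained from the direct-sum decomposition of the preceding Proposition. One may alternatively compute the Hilbert series of both sides using the identification $\Der\Nov^!=\Nov\bullet\Nov$ and compare dimensions degree by degree. The main obstacle is this completeness step: one must check that no further $\prec$-only identity is hidden among the many derivations possible from (\ref{id11})--(\ref{id18}), and in practice I expect this requires a Gr\"obner--Shirshov-type analysis of the critical pairs restricted to the $\prec$-only projection.
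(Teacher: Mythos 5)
Your overall architecture (derive (\ref{Nov1})--(\ref{Nov3}) as compositions of (\ref{id11})--(\ref{id18}), then establish completeness by a Gr\"obner--Shirshov analysis of critical pairs) matches the paper's, and your sketch for (\ref{Nov1}) is essentially workable: after replacing $a\prec(b\prec c)$ by $-(b\prec c)\succ a$ via (\ref{id11}) and expanding $((b\prec c)\succ a)\prec d$ by (\ref{id14}), both resulting terms reduce via (\ref{id11}) and (\ref{id15}) to monomials of the form $u\prec(v\prec(w\prec t))$, which vanish by the first half of (\ref{Nov1}) (itself a consequence of (\ref{id11}) and (\ref{id12})). The paper instead equates two evaluations of $(b\prec a)\succ(c\prec d)$, but both routes are legitimate degree-$4$ compositions.

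There is, however, a genuine error in your treatment of degree $3$ and of (\ref{Nov2})--(\ref{Nov3}). Right-commutativity $(a\prec b)\prec c=(a\prec c)\prec b$ does \emph{not} hold in $\Der\Nov^!_{\prec}\<X\>$. Substituting (\ref{id11}) into (\ref{id13}) yields only the right-symmetry of the associator, $(a\prec b)\prec c-a\prec(b\prec c)=(a\prec c)\prec b-a\prec(c\prec b)$, and substituting (\ref{id11}) into (\ref{id17}) yields left-commutativity $a\prec(b\prec c)=b\prec(a\prec c)$; these two identities are what make $\Der\Nov^!_{\prec}\<X\>$ a Novikov algebra in the convention used in this paper. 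If right-commutativity also held, the degree-$3$ multilinear component of the $\prec$-part would collapse below dimension $6$, contradicting $\dim\Der\Nov^!(3)=12=\dim\Nov(3)+\dim\Bi\Com(3)$; moreover (\ref{Nov2}) and (\ref{Nov3}) would then be trivial consequences of a degree-$3$ identity and would not need to be listed at all. The correct derivation of (\ref{Nov2}) and (\ref{Nov3}) goes through (\ref{Nov1}) together with right-symmetry: for instance $(b\prec c)\prec d-(b\prec d)\prec c=b\prec(c\prec d)-b\prec(d\prec c)$, and both right-hand terms die under an outer $a\prec(-)$ by (\ref{Nov1}). Finally, of your two options for the completeness half, the Hilbert-series comparison via $\Der\Nov^!=\Nov\bullet\Nov$ is circular, since the dimension of the black Manin product is not known independently (it is obtained in this paper from the bases built on top of this very theorem); only your Gr\"obner--Shirshov option is viable, and that is exactly what the paper does, verifying by computer that all compositions involving mixed operations are trivial or zero in degree $4$, after which only single-operation rules remain.
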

\begin{proof}
To prove the result, we consider the identities (\ref{id11})-(\ref{id18}) as a rewriting system and compute the Gr\"obner basis for the corresponding operad. Let us define the order on monomials as follows:
\begin{itemize}
    \item A monomial involving various types of operations is always greater than a monomial that contains only one type of operation.
    \item Monomials involving various types of operations are ordered in lexicographic order, where the operation $\succ$ is greater than the operation $\prec$.
    \item Monomials that contain only one type of operation are ordered in lexicographic order.
\end{itemize}

First, we show that identities (\ref{Nov1})-(\ref{Nov3}) hold in algebra $\Der\Nov^!\<X\>$. From the composition of (\ref{id11}) and (\ref{id12}), we obtain
\[
a\prec((c\prec d)\succ b)=a\prec(b\prec(c\prec d))=0.
\]
The identity
\[
a\succ(b\prec c)=a\prec(b\prec c)
\]
can be obtained from (\ref{id11}) and (\ref{id15}), and the identity
\[
(b\prec a)\succ c=(c\prec a)\prec b-(c\prec b)\prec a-a\prec(c\prec b)
\]
can be obtained from (\ref{id11}) and (\ref{id13}). From the composition of the last two given identities, we have
\begin{multline*}
(b\prec a)\succ(c\prec d)=((c\prec d)\prec a)\prec b-((c\prec d)\prec b)\prec a-a\prec ((c\prec d)\prec b)= \\
(c\prec d)\prec (a\prec b)+((c\prec d)\prec b)\prec a-(c\prec d)\prec (b\prec a)-((c\prec d)\prec b)\prec a-a\prec ((c\prec d)\prec b)=\\
-(c\prec d)\prec (b\prec a)
\end{multline*}
and
\begin{multline*}
(b\prec a)\succ(c\prec d)=(b\prec a)\prec(c\prec d)=b\prec(a\prec(c\prec d))+(b\prec (c\prec d))\prec a\\
-b\prec((c\prec d)\prec a).
\end{multline*}
The equality gives
\[
b\prec(a\prec(c\prec d))+(b\prec (c\prec d))\prec a=(b\prec (c\prec d))\prec a=0.
\]
The identities (\ref{Nov2}) and (\ref{Nov3}) can be easily proved using (\ref{Nov1}) and the right-symmetric identity.

To prove the result, we must show that all compositions of rewriting rules are trivial, see \cite{bremner-dotsenko}. Using the computer software \cite{DotsHij}, we find that at degree 4, all compositions involving various operations are either trivial or equal to zero. For compositions of higher degrees, only rewriting rules involving a single operation remain.
\end{proof}

\begin{theorem}
The algebra $\Der\Nov^!_{\succ}\<X\>$ coincides with a free bicommutative algebra with the following additional identities:
\begin{equation}\label{bicom1}
((a\succ b)\succ c)\succ d=d\succ(c\succ(b\succ a))=c\succ(b\succ(a\succ d)).  
\end{equation}
\begin{equation}\label{bicom2}
((a\succ b)\succ c)\succ d=c\succ((d\succ a)\succ b)=b\succ((d\succ a)\succ c)=b\succ((c\succ a)\succ d).  
\end{equation}
\end{theorem}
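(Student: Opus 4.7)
The plan is to follow the same three-step structure as in the proof of Theorem \ref{th1}. First, the bicommutativity of $\succ$ is immediate from the defining identities: left commutativity $a\succ(b\succ c)=b\succ(a\succ c)$ is (\ref{id16}), and right commutativity $(a\succ b)\succ c=(a\succ c)\succ b$ is (\ref{id18}). This gives a canonical surjection from the free bicommutative algebra on $X$ onto $\Der\Nov^!_{\succ}\<X\>$, so the task is to identify the kernel in degree $\ge 4$.

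Second, I would derive (\ref{bicom1}) and (\ref{bicom2}) from (\ref{id11})--(\ref{id18}) by chaining a few auxiliary identities that eliminate occurrences of $\prec$. The central auxiliary identity comes from (\ref{id14}) together with (\ref{id18}):
\[
(a\succ b)\prec c=(a\succ b)\succ c-c\succ(a\succ b).
\]
Applying $-\succ d$ to both sides, transforming the left-hand side by (\ref{id17}) as $((a\succ b)\prec c)\succ d=(d\prec c)\succ(a\succ b)$, and reducing $(c\succ(a\succ b))\succ d$ via (\ref{id18}) to $(c\succ d)\succ(a\succ b)$ yields a relation expressing $((a\succ b)\succ c)\succ d$ in terms of $(c\succ d)\succ(a\succ b)$ and $(d\prec c)\succ(a\succ b)$. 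Flipping the $\prec$-term by (\ref{id15}) and iterating with the arguments cyclically permuted produces pairwise cancellations of the mixed terms; rewriting the right-nested monomials by (\ref{id16}) then yields (\ref{bicom1}). The equalities in (\ref{bicom2}) arise by the same technique applied to $c\succ((d\succ a)\succ b)$: one replaces the inner $(d\succ a)\succ b$ via the auxiliary identity or (\ref{id14}) and eliminates the resulting mixed terms by (\ref{id15}) and (\ref{id16}), with the symmetry of $((a\succ b)\succ c)\succ d$ in $\{b,c,d\}$ (from (\ref{id18})) generating the remaining equalities for free.

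Third, completeness is established by computing the Gr\"obner--Shirshov basis of the operad under the monomial order fixed in the proof of Theorem \ref{th1}. By the software \cite{DotsHij}, at degree 4 every composition involving at least one mixed-operation rule reduces to zero, and in higher degrees only single-operation compositions survive; for the $\succ$-only fragment these are exactly handled by bicommutativity together with (\ref{bicom1}) and (\ref{bicom2}).

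The main obstacle I anticipate is step two. A single application of (\ref{id15}) or of the reverse of (\ref{id11}) to an expression of the form $x\succ(y\prec z)$ is tautological, so each new equality requires a carefully balanced combination of two or three identities whose $\prec$-terms cancel in pairs. The correct sequences are not apparent from the shape of the defining identities alone, and in practice it is probably easiest to locate the required cancellations from the Gr\"obner basis output and then rewrite them as direct algebraic derivations, rather than to search for them by hand.
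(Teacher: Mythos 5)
Your overall architecture is the same as the paper's: read off bicommutativity of $\succ$ from (\ref{id16}) and (\ref{id18}), derive (\ref{bicom1})--(\ref{bicom2}) inside $\Der\Nov^!\<X\>$, and then certify completeness by checking triviality of all compositions with the software \cite{DotsHij} under the order of Theorem \ref{th1}. Steps one and three are fine. The gap is exactly where you say it is: step two is not actually carried out, and the mechanism you propose (applying $-\succ d$ to the consequence of (\ref{id14}), flipping mixed terms by (\ref{id15})/(\ref{id17}), and hoping that cyclic permutation of arguments makes the mixed terms cancel in pairs) is left unverified. As you note, each single flip is tautological, and your sketch produces at best one relation mixing left-normed and right-nested $\succ$-monomials; it is not shown that the collection of such relations implies the specific equalities (\ref{bicom1}) and (\ref{bicom2}). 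Since these derivations are the substantive content of the first half of the theorem, the proof is incomplete as written.

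The device the paper actually uses is worth stating, because it is more targeted than pairwise cancellation: one evaluates a single mixed monomial in two independent ways, one of which yields $0$. Concretely, $(b\succ d)\succ(a\prec c)$ is rewritten once via $y\succ(a\prec c)=a\succ(y\prec c)$ and (\ref{id14}) into $-a\succ(c\succ(b\succ d))+a\succ((b\succ c)\succ d)$, and once via (\ref{id11}), (\ref{id15}) and (\ref{id17}) into $d\prec(b\prec(a\prec c))$, which vanishes by the identity (\ref{Nov1}) already established for the $\prec$-fragment in Theorem \ref{th1}; equating the two gives $a\succ(c\succ(b\succ d))=a\succ((b\succ c)\succ d)$, i.e.\ (\ref{help1}). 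The same two-way evaluation of $a\succ((b\succ c)\prec d)$ (which vanishes by (\ref{id12})) and of $((a\succ d)\prec b)\succ c$ gives (\ref{help2}) and (\ref{help3}), and (\ref{bicom1})--(\ref{bicom2}) follow from these three. Note in particular the dependence on the $\prec$-side identity (\ref{Nov1}), which your proposal does not use at all; without some input of this kind it is unclear that your cancellation scheme closes. If you intend to extract the derivations from the Gr\"obner basis output, as you suggest, that is legitimate, but then those derivations must actually appear in the proof; at present they do not.
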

\begin{proof}
We prove the result analogically as proved Theorem \ref{th1}. First, we show that the identities (\ref{bicom1}) and (\ref{bicom2}) hold in the algebra $\Der\Nov^!\<X\>$.
From (\ref{id11}) and (\ref{id15}), we have
\[
b\succ(a\prec c)=a\succ(b\prec c).
\]
Calculating the composition of the obtained identity and (\ref{id14}), one can obtain 
\[
(b\succ d)\succ (a\prec c)=a\succ((b\succ d)\prec c)=^{(\ref{id14})} -a\succ(c\succ (b\succ d))+a\succ((b\succ c)\succ d)
\]
and
\begin{multline*}
(b\succ d)\succ (a\prec c)=(b\succ d)\prec (a\prec c)+(a\prec c)\succ (b\succ d)=^{(\ref{id14})}\\
-(a\prec c)\succ (b\succ d)+(b\succ(a\prec c))\succ d+(a\prec c)\succ (b\succ d)=(b\succ(a\prec c))\succ d=^{(\ref{id15})}\\
((a\prec c)\succ b)\succ d=^{(\ref{id11})} (b\prec (a\prec c))\succ d=^{(\ref{id11})} d\prec(b\prec (a\prec c))=^{(\ref{Nov1})}0.
\end{multline*}
The equality gives
\begin{equation}\label{help1}
a\succ(c\succ (b\succ d))=a\succ((b\succ c)\succ d).    
\end{equation}

The identities (\ref{id11}) and (\ref{id15}) gives
\[
a\succ(b\prec c)=a\prec(b\prec c).
\]
Calculating the composition of this identity and (\ref{id14}), we obtain
\[
a\succ((b\succ c)\prec d)=a\prec((b\succ c)\prec d)=^{(\ref{id14})} -a\prec(d\succ(b\succ c))+a\prec( (b\succ d)\succ c)=^{(\ref{id12})} 0
\]
and
\begin{multline*}
a\succ((b\succ c)\prec d)=a\succ((b\succ d)\prec c)-a\succ(b\succ(d\succ c))+a\succ(b\succ(c\succ d))=^{(\ref{id14})} \\
-a\succ(c\succ(b\succ d))+a\succ((b\succ c)\succ d)-a\succ(b\succ(d\succ c))+a\succ(b\succ(c\succ d))=^{(\ref{help1})} \\
-a\succ(b\succ(d\succ c))+a\succ(b\succ(c\succ d)).
\end{multline*}
The equality gives
\begin{equation}\label{help2}
a\succ(b\succ(d\succ c))=a\succ(b\succ(c\succ d)).    
\end{equation}

The last necessary identity can be obtained from the composition of (\ref{id14}) and (\ref{id15}). So, we have
\[
((a\succ d)\prec b)\succ c=c\succ((a\succ d)\prec b)=^{(\ref{id14})} -c\succ(b\succ(a\succ d))+c\succ((a\succ b)\succ d)=^{(\ref{help1})} 0
\]
and
\[
((a\succ d)\prec b)\succ c=((a\succ d)\succ b)\succ c-(a\succ (b\succ d))\succ c.
\]
The equality gives
\begin{equation}\label{help3}
((a\succ d)\succ b)\succ c=(a\succ (b\succ d))\succ c.   
\end{equation}
The identities (\ref{bicom1}) and (\ref{bicom2}) can be easily proved by using (\ref{help1}), (\ref{help2}) and (\ref{help3}).

To prove the result, as before, we have to show that all compositions of rewriting rules are trivial. The calculations are straightforward and all these long calculations can be done using the computer software \cite{DotsHij}.

\end{proof}

\section{A basis of algebra $\Der\Nov^!\<X\>$}

To construct a basis of the algebra $\Der\Nov^!\<X\>$ it suffices to construct a basis for the algebras $\Der\Nov^!_{\succ}\<X\>$ and $\Der\Nov^!_{\prec}\<X\>$ separately. 

Let us define a set $\mathcal{N}_i$ as follows:
\[
\mathcal{N}_1=\{x_i\},\;\mathcal{N}_2=\{x_ix_j\},\;\mathcal{N}_3=\{(x_{k_1}x_{k_2})x_{k_3},x_{l_1}(x_{l_2}x_{l_3})\}
\]
and
\[
\mathcal{N}_n=\{((\cdots(x_{m_1}x_{m_{2}})\cdots)x_{m_{n-1}})x_{m_n}, x_{r_1}((\cdots(x_{r_2}x_{r_{3}})\cdots)x_{r_n})\},
\]
where $k_2\leq k_3$, $l_1\geq l_2$, $n\geq 4$, $m_n\geq \ldots \geq m_2$ and $r_2\geq r_1$, $r_n\geq\ldots\geq r_3$. Also, we set
\[
\mathcal{N}=\bigcup_{i}\mathcal{N}_i
\]
\begin{theorem}
Let $\Nov_s\<X\>$ be a free Novikov algebra with identities (\ref{Nov1}) and (\ref{Nov2}). The set $\mathcal{N}$ is the basis of the algebra $\Nov_s\<X\>$.
\end{theorem}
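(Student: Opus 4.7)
The plan is the standard two-step argument: first show that $\mathcal{N}$ spans $\Nov_s\<X\>$, then show it is linearly independent. I would proceed by induction on the degree of a monomial, and I would throughout keep the rewriting/normal-form perspective used in the proofs of Theorem~\ref{th1} and Theorem~2.

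For spanning, the key observation is that identity (\ref{Nov1}) drastically restricts the admissible tree shapes: it forces any tree with a right-branching spine of depth $\geq 3$ to vanish, and it also forces $(A\prec B)\prec C=0$ whenever $B$ has depth $\geq 2$ at its root. A case analysis on the outermost operation then leaves only two surviving shapes, namely the fully left-normed $(\cdots(x_{m_1}x_{m_2})\cdots)x_{m_n}$ and the ``singleton-times-left-normed'' $x_{r_1}(((x_{r_2}x_{r_3})\cdots)x_{r_n})$. Any other mixed shape, such as $(x_1 x_2)\prec(x_3 x_4)$, is reduced to these two by applying the left-symmetric Novikov identity; the correction terms that arise have a left-normed factor acting on a right-nested factor of depth $\geq 2$, and so vanish by (\ref{Nov1}). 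A decreasing complexity measure (e.g.\ depth of the right subtree at the root) guarantees termination.

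Once the shape is fixed, I would sort indices within it. Novikov right-commutativity $(a\prec b)\prec c=(a\prec c)\prec b$ sorts the tail of a left-normed monomial, yielding $m_2\leq\cdots\leq m_n$. For the other shape, identity (\ref{Nov2}) $a\prec((b\prec c)\prec d)=a\prec((b\prec d)\prec c)$, iterated together with right-commutativity applied inside the inner left-normed block, sorts $r_3\leq\cdots\leq r_n$. The boundary constraint $r_2\geq r_1$ (respectively $l_1\geq l_2$ at length $3$) comes from a single application of the left-symmetric identity, which exchanges the outer variable with the leftmost inner variable modulo left-normed terms; the length-$2$ and length-$3$ corner cases are then checked directly.

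For linear independence, I would combine (\ref{Nov1}), (\ref{Nov2}), right-commutativity, and left-symmetry into a single Gr\"obner-Shirshov rewriting system for the operad $\Nov_s$, using the monomial order from the proof of Theorem~\ref{th1}. Elements of $\mathcal{N}$ are precisely the irreducible monomials of this system, so independence reduces to the check that every composition of the rewriting rules reduces to zero; that check can be carried out by hand in low degree and with the operadic Gr\"obner-basis software of \cite{DotsHij} in higher degree, exactly as in the two preceding theorems. The main obstacle is the completeness of this Gr\"obner-Shirshov system: a priori, the interplay of (\ref{Nov1}) and (\ref{Nov2}) with the Novikov axioms could produce a hidden consequence of degree $\geq 5$ that identifies two distinct elements of $\mathcal{N}$; if such a ghost identity existed, the spanning argument above would still go through but linear independence would fail. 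Ruling this out is where the machine computation does essential work.
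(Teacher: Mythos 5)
Your spanning argument follows the same route as the paper: a case analysis on the outermost product $(A_{n-k})(A_k)$, using (\ref{Nov1}) to kill monomials whose right factor is too deeply right-nested and the left-symmetric (right-symmetric) Novikov identity to push everything into the two surviving shapes, followed by sorting of indices via right-commutativity and (\ref{Nov2}). That half is fine and matches the paper in substance.

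The linear independence half is where you diverge, and where your argument is not actually complete. You reduce independence to confluence of a Gr\"obner--Shirshov system for $\Nov_s$ and then defer to machine computation, explicitly conceding that a ``ghost identity'' of degree $\geq 5$ would sink the claim. But a finite run of the software of \cite{DotsHij} only certifies triviality of compositions up to whatever degree you compute; without a termination argument (i.e.\ a proof that no new rewriting rules are generated beyond some explicit degree), this does not rule out hidden consequences in all degrees, so the gap you name is left open rather than closed. The paper avoids this entirely: it defines an explicit algebra $A\<X\>$ whose underlying vector space has $\mathcal{N}$ as a basis, writes down the multiplication of arbitrary basis monomials in closed form (left-normed times left-normed lands on a monomial of the second shape with sorted indices; any product involving a factor of the second shape is $0$; low degrees handled directly), and verifies that this concrete multiplication satisfies the defining identities of $\Nov_s\<X\>$. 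Since $A\<X\>$ is then a homomorphic image of $\Nov_s\<X\>$ in which $\mathcal{N}$ is independent, and $\mathcal{N}$ spans $\Nov_s\<X\>$, one gets $A\<X\>\cong\Nov_s\<X\>$ and independence for free. This model-algebra construction is a finite, hand-checkable verification and is the step your proposal is missing; I would replace your Gr\"obner--Shirshov appeal with it, or else supply an explicit degree bound beyond which all compositions are automatically trivial.
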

\begin{proof}
For monomials up to degree $4$, the result is obvious. For monomials bigger than degree $4$, we first show that any monomial from $\Nov_s\<X\>$ can be expressed as a sum of monomials of the form
\[
((\cdots(x_{i_1}x_{i_{2}})\cdots)x_{i_{n-1}})x_{i_n}\;\textrm{and}\; x_{i_1}((\cdots(x_{i_2}x_{i_{3}})\cdots)x_{i_n}).
\]
Let us denote by $A_i$ an arbitrary monomial of degree $i$. Starting from degree $5$, we prove it by induction on the length of monomials. Consider $4$ cases. By the inductive hypothesis, we have

Case 1: $(A_{n-1})(A_1)$. If $(A_{n-1})=(\cdots(x_{i_1}x_{i_{2}})\cdots)x_{i_{n-1}}$ then there is nothing to do. Otherwise, $(A_{n-1})(A_1)=^{(\ref{Nov1})}0$.

Case 2.1: By left-commutative identity,
\[
(A_{n-2})(A_2)=((\cdots(x_{i_1}x_{i_{2}})\cdots)x_{i_{n-2}})(x_{i_{n-1}}x_{i_{n}})=x_{i_{n-1}}(((\cdots(x_{i_1}x_{i_{2}})\cdots)x_{i_{n-2}})x_{i_{n}}).
\]

Case 2.2: By left-commutative identity,
\begin{multline*}
(A_{n-2})(A_2)=(x_{i_1}((\cdots(x_{i_2}x_{i_{3}})\cdots)x_{i_{n-2}}))(x_{i_{n-1}}x_{i_{n}})=\\
x_{i_{n-1}}((x_{i_1}((\cdots(x_{i_2}x_{i_{3}})\cdots)x_{i_{n-2}}))x_{i_{n}})=^{(\ref{Nov1})}0.    
\end{multline*}

Case 3.1: By right-symmetric identity:
\begin{multline*}
(A_{n-3})(A_3)=((\cdots(x_{i_1}x_{i_{2}})\cdots)x_{i_{n-3}})((x_{i_{n-2}}x_{i_{n-1}})x_{i_n})=
(\cdots(x_{i_1}x_{i_{2}})\cdots)(x_{i_{n-3}}((x_{i_{n-2}}x_{i_{n-1}})x_{i_n}))\\
+((\cdots(x_{i_1}x_{i_{2}})\cdots)((x_{i_{n-2}}x_{i_{n-1}})x_{i_n}))x_{i_{n-3}}-(\cdots(x_{i_1}x_{i_{2}})\cdots)(((x_{i_{n-2}}x_{i_{n-1}})x_{i_n})x_{i_{n-3}})=^{(\ref{Nov1})}\\
-((\cdots(x_{i_1}x_{i_{2}})\cdots)x_{i_{n-4}})(((x_{i_{n-2}}x_{i_{n-1}})x_{i_n})x_{i_{n-3}}).
\end{multline*}
Applying right-symmetric identity and (\ref{Nov1}) several times as given above we obtain
\[
((\cdots(x_{i_1}x_{i_{2}})\cdots)x_{i_{n-3}})((x_{i_{n-2}}x_{i_{n-1}})x_{i_n})=(-1)^{n-4} x_{i_1}((\cdots(((x_{i_{n-2}}x_{i_{n-1}})x_{i_n})x_{i_{n-3}})\cdots)x_{i_2}).
\]

Case 3.2: By (\ref{Nov1}),
\begin{multline*}
(A_{n-3})(A_3)=((\cdots(x_{i_1}x_{i_{2}})\cdots)x_{i_{n-3}})(x_{i_{n-2}}(x_{i_{n-1}}x_{i_n}))=\\
(x_{i_{n-4}}((\cdots(x_{i_1}x_{i_{2}})\cdots)x_{i_{n-3}}))((x_{i_{n-2}}x_{i_{n-1}})x_{i_n})=\\
(x_{i_{n-4}}((\cdots(x_{i_1}x_{i_{2}})\cdots)x_{i_{n-3}}))(x_{i_{n-2}}(x_{i_{n-1}}x_{i_n}))=0.
\end{multline*}

Case 4.1: As given in case 3.1 we use right-symmetric identity and (\ref{Nov1}), and obtain 
\begin{multline*}
(A_{n-k})(A_{k})=((\cdots(x_{i_1}x_{i_{2}})\cdots)x_{i_{n-k}})((\cdots(x_{i_{n-k+1}}x_{i_{n-k+2}})\cdots)x_{i_{n}})=\\
(-1)^{n-k-1} x_{i_1}((\cdots(((\cdots(x_{i_{n-k+1}}x_{i_{n-k+2}})\cdots)x_{i_{n}})x_{i_{n-k}})\cdots)x_{i_2}),
\end{multline*}
where $k>3$.

Case 4.2: By (\ref{Nov1}),
\begin{multline*}
(A_{n-k})(A_{k})=(x_{i_{1}}((\cdots(x_{i_2}x_{i_{3}})\cdots)x_{i_{n-k}}))((\cdots(x_{i_{n-k+1}}x_{i_{n-k+2}})\cdots)x_{i_{n}})=\\
((\cdots(x_{i_1}x_{i_{2}})\cdots)x_{i_{n-k}})(x_{i_{n-k+1}}((\cdots(x_{i_{n-k+2}}x_{i_{n-k+3}})\cdots)x_{i_{n}}))=\\
(x_{i_{1}}((\cdots(x_{i_2}x_{i_{3}})\cdots)x_{i_{n-k}}))(x_{i_{n-k+1}}((\cdots(x_{i_{n-k+2}}x_{i_{n-k+3}})\cdots)x_{i_{n}}))=0.
\end{multline*}

Now, let us show that all monomials of the form
\[
((\cdots(x_{i_1}x_{i_{2}})\cdots)x_{i_{n-1}})x_{i_n}\;\textrm{and}\; x_{i_1}((\cdots(x_{i_2}x_{i_{3}})\cdots)x_{i_n}).
\]
can be written as a sum monomials from $\mathcal{N}$.
By right-symmetric identity and (\ref{Nov1}),
\begin{multline*}
(\cdots(((\cdots(x_{i_1}x_{i_{2}})\cdots)x_{i_t})x_{i_{t+1}})\cdots)x_{i_n}=(\cdots((\cdots(x_{i_1}x_{i_{2}})\cdots)(x_{i_t}x_{i_{t+1}}))\cdots)x_{i_n}\\
+(\cdots(((\cdots(x_{i_1}x_{i_{2}})\cdots)x_{i_{t+1}})x_{i_{t}})\cdots)x_{i_n}-(\cdots((\cdots(x_{i_1}x_{i_{2}})\cdots)(x_{i_{t+1}}x_{i_{t}}))\cdots)x_{i_n}=\\
(\cdots(((\cdots(x_{i_1}x_{i_{2}})\cdots)x_{i_{t+1}})x_{i_{t}})\cdots)x_{i_n},
\end{multline*}
that is, the generators $x_{i_t}$ and $x_{i_{t+1}}$ can be ordered, where $1<t<n-1$. For $t=n-1$, using the case 3.1, we have
\begin{multline*}
((((\cdots(x_{i_1}x_{i_{2}})\cdots)x_{i_{n-2}})x_{i_{n-1}})x_{i_n}=((\cdots(x_{i_1}x_{i_{2}})\cdots)x_{i_{n-2}})(x_{i_{n-1}}x_{i_n})\\
+(((\cdots(x_{i_1}x_{i_{2}})\cdots)x_{i_{n-2}})x_{i_{n}})x_{i_{n-1}}-(((\cdots(x_{i_1}x_{i_{2}})\cdots)x_{i_{n-2}})(x_{i_{n}}x_{i_{n-1}})=\\
(((\cdots(x_{i_1}x_{i_{2}})\cdots)x_{i_{n-2}})x_{i_{n}})x_{i_{n-1}}+(-1)^{n-3}
x_{i_1}((\cdots((x_{i_{n-1}}x_{i_n})x_{i_{n-2}})\cdots)x_{i_{2}})\\
-(-1)^{n-3}
x_{i_1}((\cdots((x_{i_{n}}x_{i_{n-1}})x_{i_{n-2}})\cdots)x_{i_{2}}),
\end{multline*}
that is, the generators $x_{i_{n-1}}$ and $x_{i_{n}}$ can be ordered by modulo of monomials of the form $x_{i_1}((\cdots(x_{i_2}x_{i_{3}})\cdots)x_{i_n})$. In an analogical way, it can be shown that the generators $x_{i_{3}},x_{i_{4}},\ldots,x_{i_{n}}$ in $x_{i_1}((\cdots(x_{i_2}x_{i_{3}})\cdots)x_{i_n})$ can be ordered.

Let us apply left-commutative and right-symmetric identities on monomial 
\[
((\cdots(x_{i_1}x_{i_{2}})\cdots)x_{i_{n-2}})(x_{i_{n-1}}x_{i_n})
\] in two different ways as follows:
\[
((\cdots(x_{i_1}x_{i_{2}})\cdots)x_{i_{n-2}})(x_{i_{n-1}}x_{i_n})=x_{i_{n-1}}(((\cdots(x_{i_1}x_{i_{2}})\cdots)x_{i_{n-2}})x_{i_n})
\]
and
\begin{multline*}
(((\cdots(x_{i_1}x_{i_{2}})\cdots)x_{i_{n-3}})x_{i_{n-2}})(x_{i_{n-1}}x_{i_n})=((\cdots(x_{i_1}x_{i_{2}})\cdots)x_{i_{n-3}})(x_{i_{n-2}}(x_{i_{n-1}}x_{i_n}))\\
+(((\cdots(x_{i_1}x_{i_{2}})\cdots)x_{i_{n-3}})(x_{i_{n-1}}x_{i_n}))x_{i_{n-2}}-((\cdots(x_{i_1}x_{i_{2}})\cdots)x_{i_{n-3}})((x_{i_{n-1}}x_{i_n})x_{i_{n-2}})=^{(\ref{Nov1})}\\
-((\cdots(x_{i_1}x_{i_{2}})\cdots)x_{i_{n-3}})((x_{i_{n-1}}x_{i_n})x_{i_{n-2}})=\cdots=\\
(-1)^{n-3}
x_{i_1}((\cdots(((x_{i_{n-1}}x_{i_n})x_{i_{n-2}})x_{i_{n-3}})\cdots)x_{i_{2}}).
\end{multline*}
So, we obtain 
\[
x_{i_1}((\cdots(((x_{i_{n-1}}x_{i_n})x_{i_{n-2}})x_{i_{n-3}})\cdots)x_{i_{2}})=(-1)^{n-3}x_{i_{n-1}}(((\cdots(x_{i_1}x_{i_{2}})\cdots)x_{i_{n-2}})x_{i_n}),
\]
that is, the generators $x_{i_1}$ and $x_{i_{n-1}}$ can be ordered.

Consider an algebra $A\<X\>$ with a basis $\mathcal{N}$ and multiplication on basis monomials is defined as follows:

\begin{itemize}
    \item Up to degree 3, we define multiplication in $A\<X\>$ that is consistent with left-commutative and right-symmetric identities.
    \item $((\cdots(x_{i_1}x_{i_{2}})\cdots)x_{i_n})((\cdots(x_{j_1}x_{j_{2}})\cdots)x_{j_m})=x_{k_1}((\cdots(x_{k_2}x_{k_{3}})\cdots)x_{k_{n+m}})$, where $k_1\leq k_2$, $k_3\leq\ldots\leq k_{n+m}$ and $\{k_1,k_2\}=\{i_1,j_1
    \}$, $\{k_3,\ldots,k_{n+m}\}=\{i_2,\ldots,i_n,j_2,\ldots,j_m\}$.
    \item $(x_{i_{1}}((\cdots(x_{i_2}x_{i_{3}})\cdots)x_{i_{n}}))((\cdots(x_{j_{1}}x_{j_{2}})\cdots)x_{j_{m}})=\\
((\cdots(x_{i_1}x_{i_{2}})\cdots)x_{i_{n}})(x_{j_{1}}((\cdots(x_{j_{2}}x_{j_{3}})\cdots)x_{j_{m}}))=\\
(x_{i_{1}}((\cdots(x_{i_2}x_{i_{3}})\cdots)x_{i_{n}}))(x_{j_{1}}((\cdots(x_{j_{2}}x_{j_{3}})\cdots)x_{j_{m}}))=0.$
\end{itemize}
By straightforward calculation, one can check that an algebra $A\<X\>$ satisfies the defining identities of algebra $\Nov_s\<X\>$. It remains to note that $A\<X\>\cong \Nov_s\<X\>$.
\end{proof}

Let us define a set $\mathcal{B}_i$ as follows:
\[
\mathcal{B}_1=\{x_i\},\;\mathcal{B}_2=\{x_ix_j\},\;\mathcal{B}_3=\{(x_{k_1}x_{k_2})x_{k_3},x_{l_1}(x_{l_2}x_{l_3})\}
\]
and
\[
\mathcal{B}_n=\{((\cdots(x_{m_1}x_{m_{2}})\cdots)x_{m_{n-1}})x_{m_n}\},
\]
where $k_2\leq k_3$, $l_1\geq l_2$, $n\geq 4$ and $m_n\geq \ldots \geq m_2\geq m_1$. Also, we set
\[
\mathcal{B}=\bigcup_{i}\mathcal{B}_i
\]
\begin{theorem}
Let $\Bi\Com_s\<X\>$ be a free bicommutative algebra with identities (\ref{bicom1}) and (\ref{bicom2}). The set $\mathcal{B}$ is the basis of the algebra $\Bi\Com_s\<X\>$.
\end{theorem}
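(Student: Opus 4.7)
The plan is to proceed in close analogy with the proof of the preceding theorem. First I would show that $\mathcal{B}$ spans $\Bi\Com_s\<X\>$ by reducing every monomial to a left-normed comb with letters arranged in nondecreasing order, using (\ref{bicom1}), (\ref{bicom2}) and the bicommutative identities. Then I would establish linear independence by constructing an algebra $A\<X\>$ whose underlying space has basis $\mathcal{B}$ and verifying that it satisfies the defining identities of $\Bi\Com_s\<X\>$; the universal property of the free algebra then gives an isomorphism $A\<X\>\cong\Bi\Com_s\<X\>$.

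For the spanning step, induction on degree handles $n\le 3$ trivially. For $n\ge 4$ I would first invoke the standard normal form for the free bicommutative algebra to express any monomial as a sum of left-normed combs $((\cdots(x_{i_1}x_{i_2})\cdots)x_{i_{n-1}})x_{i_n}$ and right-normed combs $x_{i_1}(x_{i_2}(\cdots(x_{i_{n-1}}x_{i_n})\cdots))$. Identity (\ref{bicom1}), in the form $d\succ(c\succ(b\succ a))=((a\succ b)\succ c)\succ d$, rewrites a right-normed $4$-comb as a left-normed $4$-comb with the letters reversed; applied to nested subcombs this eliminates all right-normed basis elements from degree $\ge 4$. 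To fully sort the letters of a left-normed comb, I would derive from (\ref{bicom2}) the transpositions of adjacent positions: left-commutativity already swaps the first two letters, while applying right-commutativity inside the mixed shape $b\succ((c\succ a)\succ d)$ produces a right-normed expression which, after use of (\ref{bicom1}), identifies both sides of (\ref{bicom2}) with distinct left-normed $4$-combs; this gives a swap of positions $2$ and $3$. Iterating these two transpositions on the appropriate subcombs of a left-normed $n$-comb yields the full ordering $m_1\le\cdots\le m_n$.

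For linear independence I would define the multiplication on $A\<X\>$ as follows: up to degree three the product coincides with the free bicommutative multiplication restricted to $\mathcal{B}$; for any pair of basis monomials whose combined degree is at least $4$, the product is declared to be the unique left-normed comb whose multiset of letters is the union of those of the two factors, arranged in nondecreasing order. Both sides of each defining identity of $\Bi\Com_s\<X\>$ then reduce to the same fully sorted left-normed comb, so $A\<X\>$ is a $\Bi\Com_s$-algebra; the surjection $\Bi\Com_s\<X\>\twoheadrightarrow A\<X\>$ combined with the spanning statement identifies $\mathcal{B}$ as a basis. As in the previous theorem the verification of the identities reduces to a finite case analysis that may be carried out directly or via \cite{DotsHij}.

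The main obstacle, as I see it, is the confluence check that the rewriting system formed by (\ref{bicom1}), (\ref{bicom2}) and the bicommutative identities has precisely $\mathcal{B}$ as its set of irreducible normal forms, i.e.\ that no further relations among the fully sorted left-normed combs are produced by compositions of the rewriting rules, particularly those arising from the mixed shapes that appear when (\ref{bicom2}) is applied inside deeper subcombs of an $n$-comb. This is analogous to the Gr\"obner basis argument used for $\Der\Nov^!_\prec\<X\>$ in Theorem \ref{th1}, and is the only step whose completion requires nontrivial computation.
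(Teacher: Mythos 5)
Your proposal follows essentially the same route as the paper: reduce every monomial of $\Bi\Com_s\<X\>$ to a letter-sorted left-normed comb for the spanning step, then prove linear independence by endowing the span of $\mathcal{B}$ with the explicit product (the sorted left-normed comb on the union of the letters) and verifying the defining identities. One small correction: in a left-normed comb the first two letters are \emph{not} swapped by left-commutativity (which only acts on right-normed shapes) but by the consequence $((ab)c)d=c((da)b)=c((db)a)=((ba)c)d$ of (\ref{bicom2}), exactly as in the paper, while positions $2,\dots,n$ are already sorted by right-commutativity alone.
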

\begin{proof}
For monomials up to degree $4$, the result is obvious.  For monomials of degree greater than $4$, we first show that any monomial in $\Bi\Com_s\<X\>$ can be written as a sum of left-normed monomials.  Let us denote by $A_i$ an arbitrary monomial of degree $i$. Starting from degree $5$, we prove it by induction on the length of monomials. Consider $4$ cases. By the inductive hypothesis, we have

Case 1: 
\begin{multline*}
(A_1)(A_{n-1})=x_{i_1}((\cdots(x_{i_2}x_{i_3})\cdots)x_{i_{n-1}})x_{i_n}=^{(\ref{bicom2})}((x_{i_{n-1}}x_{i_{n}})x_{i_1})((\cdots(x_{i_2}x_{i_3})\cdots)x_{i_{n-2}})= \\
((x_{i_{n-1}}x_{i_{n}})((\cdots(x_{i_2}x_{i_3})\cdots)x_{i_{n-2}}))x_{i_1}=((\cdots(x_{j_1}x_{j_2})\cdots)x_{j_{n-1}})x_{i_1}. 
\end{multline*}

Case 2:
\begin{multline*}
(A_2)(A_{n-2})=(x_{i_1}x_{i_2})((\cdots(x_{i_3}x_{i_4})\cdots)x_{i_{n-1}})x_{i_n})=(x_{i_1}((\cdots(x_{i_3}x_{i_4})\cdots)x_{i_{n-1}})x_{i_n}))x_{i_2}=\\((\cdots(x_{j_1}x_{j_2})\cdots)x_{j_{n-1}})x_{i_2}. 
\end{multline*}

Case 3.1:
\begin{multline*}
(A_3)(A_{n-3})=((x_{i_1}x_{i_2})x_{i_3})((\cdots(x_{i_4}x_{i_5})\cdots)x_{i_n})=((x_{i_1}x_{i_2})((\cdots(x_{i_4}x_{i_5})\cdots)x_{i_n}))x_{i_3}=\\((\cdots(x_{j_1}x_{j_2})\cdots)x_{j_{n-1}})x_{i_3}. 
\end{multline*}

Case 3.2:
\begin{multline*}
(A_3)(A_{n-3})=(x_{i_1}(x_{i_2}x_{i_3}))((\cdots(x_{i_4}x_{i_5})\cdots)x_{i_n})=(x_{i_1}((\cdots(x_{i_4}x_{i_5})\cdots)x_{i_n}))(x_{i_2}x_{i_3})=\\
x_{i_2}((x_{i_1}((\cdots(x_{i_4}x_{i_5})\cdots)x_{i_n}))x_{i_3})
\end{multline*}
which correspond to the case 1.

Case 4:
\begin{multline*}
(A_k)(A_{n-k})=((\cdots(x_{i_1}x_{i_2})\cdots)x_{i_k})(A_{n-k})=((\cdots(x_{i_1}x_{i_2})\cdots)(A_{n-k}))x_{i_k}=\\
((\cdots(x_{j_1}x_{j_2})\cdots)x_{j_{n-1}})x_{i_k}.
\end{multline*}
For an arbitrary monomial of the form
\[
((\cdots(x_{m_1}x_{m_{2}})\cdots)x_{m_{n-1}})x_{m_n}
\]
the generators $x_{m_{2}},\ldots,x_{m_{n}}$ can be ordered by the right-commutative identity. By (\ref{bicom2}), we have
\[
((ab)c)d=c((da)b)=c((db)a)=((ba)c)d,
\]
that is, the generators $x_{m_{1}}$ and $x_{m_{2}}$ can also be ordered.

Consider an algebra $A\<X\>$ with a basis $\mathcal{B}$ and multiplication on basis monomials is defined as follows:

\begin{itemize}
    \item Up to degree 3, we define multiplication in $A\<X\>$ that is consistent with right-commutative and left-commutative identities.
    \item $((\cdots(x_{i_1}x_{i_{2}})\cdots)x_{i_n})((\cdots(x_{j_1}x_{j_{2}})\cdots)x_{j_m})=(\cdots(x_{k_1}x_{k_{2}})\cdots)x_{k_{n+m}}$, where $k_1\leq\ldots\leq k_{n+m}$ and $\{k_1,\ldots,k_{n+m}\}=\{i_1,\ldots,i_n,j_1,\ldots,j_m\}$.
\end{itemize}
By straightforward calculation, one can check that an algebra $A\<X\>$ satisfies the defining identities of algebra $\Bi\Com_s\<X\>$. It remains to note that $A\<X\>\cong \Bi\Com_s\<X\>$.
\end{proof}

\begin{proposition}
An operad $\Bi\Com$ satisfies the Dong property in the sense of \cite{Dongprop}.
\end{proposition}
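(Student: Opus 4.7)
The plan is to unfold the Dong property from \cite{Dongprop} and verify it directly using the defining data of $\Bi\Com$. Recall that the Dong property for an operad $\mathcal{P}$ asserts, roughly, that if $a(z)$ and $b(z)$ are mutually local fields in a conformal $\mathcal{P}$-algebra and $a(z), c(z)$ are likewise mutually local, then every $\mathcal{P}$-operadic product of $b(z)$ and $c(z)$ remains local with $a(z)$. Equivalently, the free conformal $\mathcal{P}$-algebra on a set of pairwise local generators is again a $\mathcal{P}$-algebra with no relations beyond those coming from the differential/conformal structure.

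First I would reduce the statement to the generating binary operation. Since $\Bi\Com$ is a quadratic operad with a single binary product, and by the preceding basis theorem every element of $\Bi\Com\langle X\rangle$ is a linear combination of left-normed monomials from $\mathcal{B}$, every higher-arity operadic composition is built up by iterated right multiplication from that single product. Thus it is enough to establish the Dong lemma at arity two, namely that pairwise locality of $a,b,c$ implies locality of both $a$ with $bc$ and of $c$ with $ab$, and then propagate the property by induction on degree along the left-normed form.

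For the arity-two step I would use the two bicommutative identities in tandem. Left-commutativity $a(bc)=b(ac)$ shows that the outer left factor may be exchanged with the first inner factor, and right-commutativity $(ab)c=(ac)b$ symmetrizes the two right factors. Expanding the relevant $n$-th products and applying these symmetries, the problem reduces to the vanishing of iterated $n$-th products of the form $a_{(i)}(b_{(j)}c)$ for $i+j$ sufficiently large, which is immediate from the pairwise locality hypotheses. The same combinatorics then drives the inductive step to higher arity using the basis $\mathcal{B}$: a left-normed monomial $(((a_1 a_2)\cdots)a_{n-1})a_n$ is local with any additional field $b$ provided each $a_i$ is, because one peels off the rightmost factor, invokes the binary Dong lemma, and uses the induction hypothesis on the shorter prefix.

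The main technical obstacle will be aligning the formal locality bound in \cite{Dongprop} with the binomial expansions produced by the bicommutative identities and tracking the dependence of the locality parameter on the arity; this is routine bookkeeping. Conceptually, the rigidity of the basis $\mathcal{B}$ collapses the Dong property for $\Bi\Com$ onto a commutative-style Dong lemma, where no genuine obstruction arises because the defining identities $a(bc)=b(ac)$ and $(ab)c=(ac)b$ involve only permutations of variables without asymmetric coefficients.
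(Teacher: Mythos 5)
Your overall strategy---verifying the Dong property directly from its definition in terms of mutual locality of formal distributions---is not the route the paper takes, and as written it has a genuine gap at the decisive step. The reduction to the binary case is fine (the Dong property is precisely the statement that pairwise locality of $a,b,c$ forces locality of $a$ with each $b_{(n)}c$; the induction along the basis $\mathcal{B}$ is beside the point), but your justification of that binary step is where all the content lives and where the argument collapses: the vanishing of $(z-t)^M a(z)\bigl(b_{(n)}c\bigr)(t)$ for large $M$ is \emph{not} ``immediate from the pairwise locality hypotheses.'' Pairwise locality only kills products in which the two local fields sit adjacent inside the same bracketing; in $a(z)\bigl(b(w)c(t)\bigr)$ they do not, and one must first use the defining identities of the operad to rearrange the expression before locality can be invoked. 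This is exactly the point at which the Dong property can fail for a general binary quadratic operad, which is why \cite{Dongprop} characterizes it by a condition on the Koszul dual rather than by inspection of the shape of the identities; your closing remark that no obstruction arises ``because the identities involve only permutations of variables'' is not a valid criterion.

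A correct direct argument would have to spell out, for instance, that left-commutativity rewrites $a(z)\bigl(b(w)c(t)\bigr)$ as $b(w)\bigl(a(z)c(t)\bigr)$, after which $(z-t)^N$ kills it, and that for $\bigl(b(w)c(t)\bigr)a(z)$ one uses right-commutativity together with the binomial expansion $(z-t)^{3N}=\sum_k\binom{3N}{k}(z-w)^k(w-t)^{3N-k}$, treating the ranges $k\ge N$ and $3N-k\ge N$ separately. None of this appears in your sketch. The paper avoids the locality bookkeeping entirely: it computes the Koszul dual $\Bi\Com^!$ via the Lie-admissibility of $S\otimes U$, finds that $\Bi\Com$ is self-dual with the same identities $(ab)c=(ac)b$ and $a(bc)=b(ac)$, observes that the monomials $(ab)c$, $(ba)c$, $c(ab)$, $c(ba)$ are linearly independent in $\Bi\Com^!\<X\>$, and then invokes the criterion of \cite{Dongprop}. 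If you wish to keep the direct approach you must supply the rearrangement-plus-binomial argument; otherwise the shortest complete proof is the dual-operad computation.
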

\begin{proof}
The Lie-admissibility condition for $S\otimes U$ gives the defining identities of the operad $\Bi\Com^!$, where $U$ is a $\Bi\Com$ algebra.
Then
\begin{multline*}
[[a\otimes u,b\otimes v],c\otimes w]=(ab)c\otimes (uv)w-(ba)c\otimes (vu)w-
c(ab)\otimes w(uv)+c(ba)\otimes w(vu),
\end{multline*}
\begin{multline*}
[[b\otimes v,c\otimes w],a\otimes u]=(bc)a\otimes (vw)u-(cb)a\otimes (wv)u-
a(bc)\otimes u(vw)+a(cb)\otimes u(wv),
\end{multline*}
and
\begin{multline*}
[[c\otimes w,a\otimes u],b\otimes v]=(ca)b\otimes (wu)v-(ac)b\otimes (uw)v-
b(ca)\otimes v(wu)+b(ac)\otimes v(uw).
\end{multline*}
Calculating the sum and collecting the same basis monomials on the right side of the tensors, we obtain
\[
(ab)c=(ac)b\;\;\textrm{and}\;\; a(bc)=b(ac).
\]
So, an operad $\Bi\Com$ is self-dual and the monomials $(ab)c$, $(ba)c$, $c(ab)$ and $c(ba)$ are linearly independent in $\Bi\Com^!\<X\>$. By the criterion given in \cite{Dongprop}, the operad $\Bi\Com$ satisfies the Dong property.
\end{proof}

\begin{corollary}
An operad $\Der\Nov^!$ satisfies the Dong property.
\end{corollary}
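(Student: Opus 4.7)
The plan is to invoke the criterion from \cite{Dongprop} exactly as in the preceding proposition for $\Bi\Com$. The necessary setup is already present in Section 3 of this paper: for a $\Der\Nov^!$-algebra $S$ and a $\Der\Nov$-algebra $U$, the skew-symmetric bracket on $S\otimes U$ was defined and the triple bracket $[[y_1\otimes x_1,y_2\otimes x_2],y_3\otimes x_3]$ together with its two cyclic shifts was expanded in full. The defining identities (\ref{id11})--(\ref{id18}) of $\Der\Nov^!$ were derived precisely as the conditions on $S$ under which the cyclic sum of those triple brackets vanishes, so the Lie-admissibility half of the criterion is automatic.

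By the criterion, the Dong property for $\Der\Nov^!$ then reduces to showing that the arity-$3$ monomials
\[
(x_i\square_1 x_j)\square_2 x_k,\qquad x_k\square_2(x_i\square_1 x_j),
\]
with $\{i,j,k\}=\{1,2,3\}$ and $\square_1,\square_2\in\{\succ,\prec\}$, which appear on the right-hand side of the tensors in the three displayed triple brackets, are linearly independent in $\Der\Nov\<\{x_1,x_2,x_3\}\>$.

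For this linear independence I would use the embedding $\Der\Nov\<X\>\hookrightarrow \Com\<X^{(\omega,\omega)}\>$ from (\ref{inclusion}): under the assignments $x\succ y=x'\circ y$, $x\prec y=x\circ y'$ and $x\circ y=xy'$, each of the $24$ monomials maps to a distinct commutative polynomial in the variables $x_i^{(n,m)}$. These expansions were already written out explicitly in the commutative-differential displays of Section 3, so the verification reduces to reading off pairwise distinctness from those formulas.

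The main obstacle, such as it is, lies in this last bookkeeping step: one must confirm that no two of the $24$ expanded polynomials coincide as elements of $\Com\<X^{(\omega,\omega)}\>$. Because each of them arises from a different permutation of $x_1,x_2,x_3$ combined with a different placement of derivatives on those factors, distinctness is visible by inspection. The dimension count $\dim\Der\Nov(3)=\binom{4}{2}^2=36$, provided by $\Der\Nov=\Nov\otimes\Nov$, confirms that there is enough room in arity $3$ for all $24$ such monomials to be linearly independent, which completes the application of the criterion.
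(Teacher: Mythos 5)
Your route is genuinely different from the paper's: the paper obtains the corollary in one line from the splitting $\Der\Nov^!(3)=\Nov(3)\oplus\Bi\Com(3)$ established earlier, inheriting the Dong property from $\Nov$ and from the preceding proposition on $\Bi\Com$, whereas you propose to verify the linear-independence criterion of \cite{Dongprop} directly in the dual operad $\Der\Nov(3)$ via the realization inside $\Com\langle X^{(\omega,\omega)}\rangle$. That strategy is admissible in principle, but as written it fails at the decisive step. The set of arity-$3$ monomials you take is too large: if $(i,j,k)$ really ranges over all permutations of $\{1,2,3\}$, the set contains \emph{equal} elements of $\Der\Nov(3)$, since the first defining identity of $\Der\Nov$ gives $(x_1\prec x_2)\prec x_3=(x_1\prec x_3)\prec x_2$; correspondingly both map to the same commutative monomial $x_1x_2'`x_3'`$ under $x\prec y=x\circ y'$, $x\circ y=xy`$. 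So neither the claimed linear independence nor the claimed pairwise distinctness of the images holds for your set. The criterion, as it is applied to $\Bi\Com$ in the preceding proposition, concerns only the monomials with a fixed argument in the outer slot, namely $(a\circ_1 b)\circ_2 c$, $(b\circ_1 a)\circ_2 c$, $c\circ_2(a\circ_1 b)$, $c\circ_2(b\circ_1 a)$ for $\circ_1,\circ_2\in\{\succ,\prec\}$ --- sixteen elements, not twenty-four.

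Even after restricting to the correct sixteen-element family, your verification method is insufficient: pairwise distinctness of the images in $\Com\langle X^{(\omega,\omega)}\rangle$ does not imply linear independence. Each image is a \emph{sum} of several commutative monomials and these supports overlap --- for instance $x_1'x_2'`x_3`$ occurs in the expansions of both $(x_1\succ x_2)\succ x_3$ and $(x_1\prec x_2)\succ x_3$ displayed in Section~3 --- so one must actually compute the rank of the full coefficient matrix, not merely check that no two rows coincide. Likewise, the closing remark that $\dim\Der\Nov(3)=36$ "leaves enough room" proves nothing: having $36$ available dimensions does not make $16$ given vectors independent. These are the computations the paper's proof deliberately avoids; the decomposition $\Der\Nov^!(3)=\Nov(3)\oplus\Bi\Com(3)$ reduces the whole question to two operads for which the Dong property is already known, which is exactly what that approach buys.
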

\begin{proof}
Since the quadratic operads $\Nov$ and $\Bi\Com$ satisfy the Dong property and
\[
\Der\Nov^!(3)=\Nov(3)\oplus\Bi\Com(3),
\]
we obtain the result.
\end{proof}

In addition, we have
\[
\dim(\Der\Nov^!(1))=1,\dim(\Der\Nov^!(2))=2,\dim(\Der\Nov^!(3))=12
\]
and starting from degree $4$,
\[
\dim(\Der\Nov^!(n))=n(n+1)/2+1.
\]


\begin{thebibliography}{99}


\bibitem{BCZ2017}
L. A. Bokut, Y. Chen, Z. Zhang, 
Gr\"obner--Shirshov bases method for Gelfand--Dorfman--Novikov 
algebras, 
Journal of Algebra its Applications,  16(1), 1750001, 22 pp.  (2017).

\bibitem{bremner-dotsenko}
M. R. Bremner, V. Dotsenko, \emph{Algebraic Operads An Algorithmic Companion}, 2016.


\bibitem{MetNov}
A. Dauletiyarova, K. Abdukhalikov, B. K. Sartayev, On the free metabelian Novikov and metabelian Lie-admissible algebras, Communications in Mathematics, 2025, 33(3 Special issue).

\bibitem{DauSar}
A. Dauletiyarova, B. K. Sartayev, Basis of the free noncommutative Novikov algebra, Journal of Algebra and its Applications, 2024, 2550292.

\bibitem{DotsHij}
V. Dotsenko, W. Heijltjes,
\emph{Gr\"obner bases for operads}, 
\url{http://irma.math.unistra.fr/~dotsenko/Operads.html} (2019).


\bibitem{DotIsmUmi2023}
V. Dotsenko, N. Ismailov, U. Umirbaev, Polynomial identities in Novikov algebras, Mathematische Zeitschrift, 303(3), 60, (2023).

\bibitem{DzhLofwall} 
A. S. Dzhumadil’daev, C. L\"ofwall, 
Trees, free right-symmetric algebras, free Novikov algebras and identities, 
Homology, Homotopy Appl., 4(2), 165--190 (2002).

\bibitem{GD79} 
I.~M. Gelfand, I.~Ya. Dorfman, 
Hamilton operators and associated algebraic structures, 
Functional analysis and its application, {13}, no.~4, 13--30 (1979).

\bibitem{KolMashSar}
P. Kolesnikov, F. Mashurov, B. Sartayev, On Pre-Novikov Algebras and Derived Zinbiel Variety, Symmetry, Integrability and Geometry: Methods and Applications (SIGMA), 2024, 20, 17.

\bibitem{KSO2019}
P. S. Kolesnikov, B. Sartayev, A. Orazgaliev,
Gelfand--Dorfman algebras, derived identities, 
and the Manin product of operads,
Journal of Algebra {539}, 260--284 (2019).

\bibitem{Dongprop}
P. S. Kolesnikov, B. K. Sartayev, On the Dong property for a binary quadratic operad, 2025.


\bibitem{Osborn92}
J. M. Osborn, 
Novikov algebras, Nova J. Algebra Geom.
{1}, 1--14 (1992).

\bibitem{Osborn92-1}
J. M. Osborn, 
Simple Novikov algebras with an idempotent, 
Communications in Algebra {20}(9), 2729--2753 (1992).

\bibitem{Osborn94}
J. M. Osborn, 
Infinite-dimensional Novikov algebras of characteristic~0,
Journal of Algebra {167}, 146--167 (1994).

\bibitem{Pan2022}
A.S. Panasenko, Semiprime Novikov algebras, International Journal of Algebra and Computation, 32(7), pp. 1369–1378, (2022).

\bibitem{erlagol2021}
B. Sartayev, P. Kolesnikov,
Noncommutative Novikov algebras, European Journal of Mathematics, 9(2), 35, (2023).


\bibitem{Xu1996}
X. Xu, 
On simple Novikov algebras and their irreducible modules, Journal of Algebra {185}(3), 905--934 (1996).

\bibitem{Xu2001}
X. Xu, 
Classification of simple Novikov algebras and their irreducible modules of characteristic~0,
Journal of Algebra {246}(2), 673--707 (2001).

\bibitem{Zelm}
E. I. Zel'manov,
On a class of local translation invariant Lie algebras, Sov. Math. Dokl. {35}, 216--218 (1987).


\end{thebibliography}
\end{document}